\numberwithin{equation}{section}
\numberwithin{figure}{section}
\theoremstyle{plain}
\newtheorem{thm}{\protect\theoremname}
\newtheorem{lem}[thm]{\protect\lemmaname}
\newtheorem{prop}[thm]{\protect\propositionname}
\newtheorem{cor}[thm]{\protect\corollaryname}
\theoremstyle{remark}
\newtheorem{rem}[thm]{\protect\remarkname}
\providecommand{\corollaryname}{Corollary}
\providecommand{\lemmaname}{Lemma}
\providecommand{\propositionname}{Proposition}
\providecommand{\remarkname}{Remark}
\providecommand{\theoremname}{Theorem}
\begin{document}
\subjclass[2020]{Primary 41A30. Secondary 41A25,  46E22, 60E15, 68T07.}
\title{Ridge Kernel Averaging and Uniform Approximation}
\begin{abstract}
We develop a framework for function classes generated by parametric
ridge kernels: one-dimensional kernels composed with affine projections
and averaged over a parameter measure. The induced kernels are positive
definite, and the resulting integral class coincides isometrically
with its reproducing kernel Hilbert space. We characterize all kernels
obtainable by varying the measure as the uniform closure of the conic
hull of ridge atoms, giving a sharp universality criterion; a slice-wise
polynomial versus non-polynomial dichotomy governs expressivity. We
then analyze random-kernel networks whose activations may be indefinite
but have a positive-definite mean. For these networks we prove a Monte
Carlo rate with mean-squared error of order one over $N$ and a high-probability
uniform bound on compact sets, without requiring pathwise positive
definiteness.
\end{abstract}

\author{James Tian}
\address{Mathematical Reviews, 535 W. William St, Suite 210, Ann Arbor, MI
48103, USA}
\email{jft@ams.org}
\keywords{reproducing kernel Hilbert space (RKHS); positive definite kernels;
ridge functions; kernel synthesis; universality; Barron spaces; random
features; random-kernel networks; Monte Carlo approximation.}
\maketitle

\section{Introduction}\label{sec:1}

The connection between wide neural networks and kernel methods has
emerged as a foundational insight in modern machine learning theory.
As the width of a layer grows large (or infinite), a network’s behavior
often converges to that of a kernel machine, with the kernel determined
by the architecture and initialization. This viewpoint opens a rigorous
pathway to understanding networks through the well-developed theory
of reproducing kernel Hilbert spaces (RKHSs) \cite{MR51437,MR2450103}.

A central thread in this development is the \textit{random feature}
(RF) model of Rahimi and Recht \cite{10.5555/2981562.2981710}. One
starts from a fixed p.d. kernel $K$ with a feature decomposition
$K(x,x')=\mathbb{E}_{\omega}[\phi_{\omega}(x)\phi_{\omega}(x')]$,
then approximates $K$ by Monte Carlo sampling of $\phi_{\omega}$.
The network output is a linear combination of these sampled features,
so training reduces to a convex optimization over the output weights.
RF methods yield efficient kernel approximation and sharp insight
into the expressive power of shallow networks \cite{10.5555/2981780.2981944,10.5555/3020847.3020936}.
In most classical constructions (e.g., Fourier features for Gaussian
kernels \cite{10.5555/2981562.2981710}; ReLU features in \cite{10.5555/2984093.2984132}),
the random feature map is \textit{pathwise p.d.}: each realization
$\ensuremath{\omega\mapsto\phi_{\omega}\left(x\right)\phi_{\omega}\left(x'\right)}$
is itself a p.d. kernel.

\textbf{Our perspective.} Pathwise positive definiteness is not necessary.
It suffices that a \textit{mean kernel} in one dimension be p.d. and
continuous,
\[
K\left(s,t\right)=\mathbb{E}_{\omega}\left[k\left(\omega,s,t\right)\right],
\]
even if the individual random kernels $k\left(\omega,\cdot,\cdot\right)$
are signed or indefinite. This shift, from enforcing structure on
each sample to enforcing it only in expectation, leads to a broader
class of \textit{random-kernel networks}, in which each neuron computes
\[
x\mapsto k\left(\omega,\left\langle a,x\right\rangle +b,t\right).
\]

We analyze the deterministic function classes that arise from the
mean kernel $\ensuremath{K}$ via \textit{parametric ridge kernels}
\[
\psi_{\left(a,b,t\right)}\left(x\right)=K\left(\left\langle a,x\right\rangle +b,t\right),\qquad\left(a,b,t\right)\in\mathbb{R}^{d}\times\mathbb{R}\times\mathbb{R}\eqqcolon Z,
\]
and the induced kernel and integral class obtained by averaging these
atoms against a finite Borel measure $\rho$ on the parameter space:
\begin{align*}
K_{\rho}\left(x,x'\right) & =\int_{Z}\psi_{z}\left(x\right)\psi_{z}\left(x'\right)d\rho\left(z\right),\\
H_{\rho} & =\left\{ x\mapsto{\textstyle \int_{Z}c\left(z\right)\psi_{z}\left(x\right)d\rho\left(z\right):c\in L^{2}\left(\rho\right)}\right\} .
\end{align*}

We then show that finite networks built from the random kernels $k$
provide Monte Carlo approximations to functions in $H_{\rho}$ without
ever approximating a kernel directly. The resulting guarantees rely
only on the mean-p.d. property of $K$, not on pathwise p.d. of $k$.

The main results are as follows:

\prettyref{sec:2} deals with RKHS identification and kernel synthesis.
For any finite Borel measure $\rho$, the induced kernel $K_{\rho}$
is continuous and p.d. on $X\subset\mathbb{R}^{d}$, and $H_{\rho}$
is (isometrically) the RKHS $H_{K_{\rho}}$ (\prettyref{lem:1}).
Writing $\mathcal{F}=\overline{\{\psi_{z}:z\in Z\}}^{\|\cdot\|_{\infty}}\subset C(X)$,
we prove that the uniform closure of $\left\{ K_{\rho}\right\} $
coincides with the uniform closure of the conic hull $\left\{ \sum_{j}w_{j}f_{j}\otimes f_{j}:w_{j}\ge0,\ f_{j}\in\mathcal{F}\right\} $
(\prettyref{prop:3}). Hence our construction produces all continuous
p.d. kernels on $X$ if and only if $\mathcal{F}=C\left(X\right)$
(\prettyref{cor:4}), separating the familiar span-dense universal
approximation of ridge dictionaries from the stronger set-dense requirement
needed for full kernel synthesis.

We obtain a universality dichotomy via one-dimensional slices. Expressivity
hinges on the slices $s\mapsto K\left(s,t\right)$. If every slice
is a polynomial of degree $\le m$, then each $H_{\rho}$ lies in
the space of multivariate polynomials of degree $\le m$ and cannot
be universal; if at least one slice is non-polynomial, the induced
ridge dictionary is dense in $C(X)$ and the corresponding RKHSs are
universal (\prettyref{prop:6}). This dichotomy recovers the usual
division: Gaussian/Laplace/Matérn (universal) versus polynomial kernels
(finite-dimensional).

In \prettyref{sec:3}, we turn to random-kernel networks and Monte
Carlo approximation. With random activations $k(\omega,\cdot,\cdot)$
whose mean is the fixed p.d. $K$, we show: (i) an $L^{2}$ Monte
Carlo bound $\mathbb{E}\|F_{N}-f\|^{2}_{L^{2}(\mu)}\le\|c\|^{2}_{L^{2}(\rho)}/N$
for any $f\in H_{\rho}$ (\prettyref{thm:9}); (ii) a \textit{uniform}
high-probability bound on compact $X$ under mild Lipschitz and compact-support
hypotheses, with dependence on the covering number $\mathcal{N}(\epsilon,X)$
(\prettyref{thm:11}); and (iii) an extension of the uniform result
from continuous-coefficient subclasses to \textit{all} $f\in H_{\rho}$
via a density argument (\prettyref{cor:13}).

These results give a measure-theoretic (and entirely deterministic)
description of the function spaces realized by ridge-kernel averaging,
together with probabilistic finite-sample guarantees for random networks
that need not be p.d. pathwise. Beyond their theoretical interest,
they suggest new architectures using non-p.d. random features, e.g.,
differences of standard kernels, non-monotonic spectral densities,
or oscillatory/structured correlations, where enforcing positive definiteness
\textit{on average} is natural.

\subsection*{Relation to prior work}

Our analysis complements and generalizes the RF program of Rahimi-Recht
\cite{10.5555/2981562.2981710,10.5555/2981780.2981944} and its extensions
(structured RFs \cite{10.5555/3042817.3042964}, optimal kernel-ridge
approximations \cite{MR3662446,pmlr-v70-avron17a}, and operator-theoretic
viewpoints \cite{MR3634888}). From a functional-analytic perspective,
$H_{\rho}$ plays the $L^{2}$ (Hilbert/RKHS) role parallel to Barron's
$L^{1}$ variation spaces for shallow networks \cite{MR1237720},
offering a distinct geometry with inner-product tools. There is also
work on indefinite/signed kernels and signed similarities in learning,
see e.g.,  \cite{MR2042050,10.1145/1015330.1015443,10.1007/978-3-540-45167-9_11};
in contrast to those settings, our randomness acts as a structural
regularizer: the average kernel is p.d. even if samples are not.

\section{Function Classes Induced by Parametric Ridge Kernels}\label{sec:2}

Here we develop the function classes induced by parametric ridge kernels.
Starting from a fixed one-dimensional positive definite kernel, we
compose it with affine projections to form ridge atoms and average
them over a parameter measure. This produces an induced kernel and
an integral function class, which we identify isometrically with its
reproducing kernel Hilbert space. We then reduce to atomic measures
and show that the uniform closure of all induced kernels is exactly
the closure of the conic hull generated by the ridge atoms, yielding
a sharp universality criterion. A slice-wise dichotomy emerges: polynomial
slices give finite-dimensional models, while a single non-polynomial
slice ensures universality on compact sets. We also note a connection
to Barron spaces, contrasting the $L^{1}$ and $L^{2}$ geometry. 

Let $X\subset\mathbb{R}^{d}$ be a compact subset. Fix a continuous
p.d. kernel $K\colon\mathbb{R}\times\mathbb{R}\rightarrow\mathbb{R}$
with $\left|K\left(s,t\right)\right|\leq1$. For each $z=\left(a,b,t\right)\in\mathbb{R}^{d}\times\mathbb{R}\times\mathbb{R}$,
set 
\[
\psi_{z}\left(x\right)\coloneqq K\left(\left\langle a,x\right\rangle +b,t\right),\quad x\in X.
\]
For any finite Borel measure $\rho$ on the parameter space 
\[
Z\coloneqq\mathbb{R}^{d}\times\mathbb{R}\times\mathbb{R}
\]
define
\[
K_{\rho}\left(x,x'\right)\coloneqq\int_{Z}\psi_{z}\left(x\right)\psi_{z}\left(x'\right)d\rho\left(z\right)
\]
on $X\times X$. 
\begin{lem}
\label{lem:1}For every finite Borel measure $\rho$ on $Z$, the
map $K_{\rho}$ is a continuous positive definite kernel on $X$.
Moreover, the space 
\[
H_{\rho}=\left\{ x\mapsto\int_{Z}c\left(z\right)\psi_{z}\left(x\right)d\rho\left(z\right):c\in L^{2}\left(\rho\right)\right\} 
\]
is (isometrically) the RKHS $H_{K_{\rho}}$ with norm 
\[
\left\Vert f\right\Vert _{H_{K_{\rho}}}=\inf\left\{ \left\Vert c\right\Vert _{L^{2}\left(\rho\right)}:f=\int c\psi d\rho\right\} .
\]
\end{lem}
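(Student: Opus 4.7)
The plan is to set up the standard feature-map identification of an RKHS. Define $\Phi\colon X\to L^{2}(\rho)$ by $\Phi(x)(z):=\psi_{z}(x)$. Since $|\psi_{z}(x)|\le 1$ and $\rho$ is finite, $\Phi(x)\in L^{2}(\rho)$ with $\|\Phi(x)\|_{L^{2}(\rho)}\le\rho(Z)^{1/2}$, and by construction
\[
K_{\rho}(x,x')=\langle\Phi(x),\Phi(x')\rangle_{L^{2}(\rho)}.
\]
From this factorization, positive definiteness is automatic: $\sum_{i,j}\alpha_{i}\alpha_{j}K_{\rho}(x_{i},x_{j})=\|\sum_{i}\alpha_{i}\Phi(x_{i})\|_{L^{2}(\rho)}^{2}\ge 0$. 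Continuity of $K_{\rho}$ on $X\times X$ follows from continuity of $K$ (hence of $(x,z)\mapsto\psi_{z}(x)$) together with dominated convergence applied to the integrand $\psi_{z}(x)\psi_{z}(x')$, which is uniformly bounded by $1$.

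For the isometric RKHS identification, introduce the bounded linear synthesis operator $T\colon L^{2}(\rho)\to\mathbb{R}^{X}$ defined by
\[
(Tc)(x)=\int_{Z}c(z)\psi_{z}(x)\,d\rho(z)=\langle c,\Phi(x)\rangle_{L^{2}(\rho)},
\]
so that $|(Tc)(x)|\le\|c\|_{L^{2}(\rho)}\,\rho(Z)^{1/2}$ by Cauchy--Schwarz, and $Tc$ is a well-defined (in fact continuous) function on $X$. By definition $H_{\rho}=T(L^{2}(\rho))$, and two coefficients $c_{1},c_{2}$ yield the same function iff $c_{1}-c_{2}\in\ker T$. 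The infimum in the stated norm is therefore achieved at the unique preimage of $f$ in $(\ker T)^{\perp}$, and $T$ descends to a linear isometry $L^{2}(\rho)/\ker T\cong H_{\rho}$ under the quotient norm $\|f\|_{H_{\rho}}=\inf\{\|c\|_{L^{2}(\rho)}:Tc=f\}$. This makes $H_{\rho}$ a Hilbert space.

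It remains to check that $H_{\rho}$ with this norm is the RKHS of $K_{\rho}$. The section $K_{\rho}(\cdot,x)=T(\Phi(x))$ lies in $H_{\rho}$; and for any $c\in\ker T$ one has $0=(Tc)(x)=\langle c,\Phi(x)\rangle_{L^{2}(\rho)}$, which shows $\Phi(x)\in(\ker T)^{\perp}$. Hence, for $f=Tc$ with $c\in(\ker T)^{\perp}$,
\[
\langle f,K_{\rho}(\cdot,x)\rangle_{H_{\rho}}=\langle c,\Phi(x)\rangle_{L^{2}(\rho)}=(Tc)(x)=f(x),
\]
which is the reproducing identity; uniqueness of the RKHS then gives $(H_{\rho},\|\cdot\|_{H_{\rho}})=H_{K_{\rho}}$. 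The one step requiring care is the descent to the quotient together with the verification $\Phi(x)\perp\ker T$, since without this the bracket $\langle Tc,T\Phi(x)\rangle_{H_{\rho}}$ is not manifestly well-defined in terms of representatives; both points are standard Hilbert-space facts but are what makes the reproducing identity independent of the choice of $c$.
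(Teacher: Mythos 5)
Your proof is correct and follows essentially the same route as the paper: positive definiteness and continuity via the $L^{2}(\rho)$ feature map and dominated convergence, and the isometric identification via the standard feature-map/synthesis-operator construction that the paper delegates to \cite{MR51437} and \cite[Theorem 4.21]{MR2450103}. You have simply written out in full the quotient-by-$\ker T$ argument and the verification that $\Phi(x)\in(\ker T)^{\perp}$, which is exactly the content of the cited standard result.
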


\begin{proof}
The p.d. property of $K_{\rho}$ is immediate, since $K_{\rho}\left(x,x'\right)=\left\langle \psi_{\cdot}\left(x\right)\psi_{\cdot}\left(x'\right)\right\rangle _{L^{2}\left(\rho\right)}$. 

Continuity of $K_{\rho}$ follows from dominated convergence, using
$\left|\psi_{z}\left(x\right)\psi_{z}\left(x'\right)\right|\leq1$
and the continuity of $\left(x,x',z\right)\mapsto\psi_{z}\left(x\right)\psi_{z}\left(x'\right)$
on the compact set $X\times X\times supp\left(\rho|_{S}\right)$ after
truncating $\rho$ to a large compact $S\subset Z$ and controlling
the tail by its mass. 

The standard construction of RKHSs from feature maps gives the identification
of $H_{K_{\rho}}$ and the norm formula. See e.g., \cite{MR51437}
and \cite[Theorem 4.21]{MR2450103}.
\end{proof}
To understand how flexible the family $\left\{ K_{\rho}\right\} $
is as $\rho$ varies, we first reduce to the case of atomic measures. 
\begin{lem}[Approximation by finite conic sums]
\label{lem:2} Let $X\subset\mathbb{R}^{d}$ be compact. For every
finite measure $\rho$ on $Z$ and $\epsilon>0$, there exist $z_{1},\dots,z_{m}\in Z$
and nonnegative weights $w_{1},\dots,w_{m}$ such that 
\[
\sup_{x,x'\in X}\left|K_{\rho}\left(x,x'\right)-\sum^{m}_{j=1}w_{j}\psi_{z_{j}}\left(x\right)\psi_{z_{j}}\left(x'\right)\right|<\epsilon.
\]
Thus every $K_{\rho}$ is a uniform limit (on compacts) of conic combinations
of the rank-one kernels $k_{z}\left(x,x'\right)\coloneqq\psi_{z}\left(x\right)\psi_{z}\left(x'\right)$. 
\end{lem}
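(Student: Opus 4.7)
The plan is to realize $K_\rho$ as an integral with values in $C(X \times X)$ and approximate it by a Riemann-type quadrature after truncating $\rho$ to a compact parameter set. The key inputs are the uniform bound $|\psi_z(x)\psi_z(x')| \le 1$ (for tail control) and the uniform continuity of $(x, x', z) \mapsto \psi_z(x)\psi_z(x')$ on compact subsets of $X \times X \times Z$ (for the quadrature step).

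First, using inner regularity of the finite Borel measure $\rho$, I would pick a compact $S \subset Z$ with $\rho(Z \setminus S) < \epsilon / 2$. Since $|\psi_z(x)\psi_z(x')| \le 1$, the tail contribution to $K_\rho(x, x')$ is bounded by $\rho(Z \setminus S) < \epsilon / 2$ uniformly in $(x, x') \in X \times X$.

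Next, the continuity of $K$ together with the compactness of $X \times S$ (so that the affine arguments $\langle a, x \rangle + b$ remain in a bounded set) yields that $(x, x', z) \mapsto \psi_z(x) \psi_z(x')$ is uniformly continuous on $X \times X \times S$. I would then choose $\delta > 0$ so that points in $S$ within distance $\delta$ produce pointwise deviations of at most $\eta := \epsilon / (2 \rho(S) + 1)$ uniformly in $(x, x')$. Cover $S$ by finitely many open balls of radius $\delta / 2$, disjointify into a Borel partition $S_1, \dots, S_m$ of $S$, pick a representative $z_j \in S_j$ whenever $\rho(S_j) > 0$, and set $w_j = \rho(S_j) \ge 0$. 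On each piece,
$$\Bigl| \int_{S_j} \psi_z(x) \psi_z(x')\, d\rho(z) - w_j \psi_{z_j}(x) \psi_{z_j}(x') \Bigr| \le \eta\, \rho(S_j),$$
so the total quadrature error over $S$ is at most $\eta\, \rho(S) < \epsilon / 2$. Combined with the tail bound, this yields the desired $\epsilon$-approximation in the uniform norm on $X \times X$.

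The main subtlety lies in obtaining a \emph{uniform} approximation in $(x, x')$ over a non-compact parameter space $Z$. Without the a priori bound $|K| \le 1$, one would need some auxiliary moment condition on $\rho$ to control the tail; with the bound in hand, both the truncation and the finite-sum quadrature are routine. The lemma is in essence the statement that $z \mapsto \psi_z \otimes \psi_z$, viewed as a $C(X \times X)$-valued continuous map, admits a Riemann approximation against any finite Borel measure on $Z$.
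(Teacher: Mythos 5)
Your proposal is correct and follows essentially the same route as the paper's proof: truncate $\rho$ to a compact set using the bound $|\psi_z(x)\psi_z(x')|\le 1$ to control the tail, then exploit uniform continuity on $X\times X\times S$ to replace the integral by a finite conic sum with weights $w_j=\rho(S_j)$. You simply supply more detail (inner regularity, the explicit modulus $\eta$) than the paper's sketch.
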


\begin{proof}
Truncate $\rho$ to a compact set so the contribution of the tail
is small. On the compact domain, the map $\left(x,x',z\right)\mapsto\psi_{z}\left(x\right)\psi_{z}\left(x'\right)$
is uniformly continuous. Take a finite partition $\left\{ A_{j}\right\} $
of the support of $\rho$, select representatives $z_{j}$ and approximate
the integral by a finite sum with weights $w_{j}=\rho\left(A_{j}\right)$.
Uniform approximation follows. 
\end{proof}
Let 
\[
\mathcal{F}\coloneqq\overline{\left\{ \psi_{z}:z\in Z\right\} }^{\left\Vert \cdot\right\Vert _{\infty}}\subset C\left(X\right)
\]
and define the cone 
\[
Cone\left(\mathcal{F}\right)\coloneqq\left\{ \sum^{m}_{j=1}w_{j}f_{j}\left(x\right)f_{j}\left(x'\right):w_{j}\geq0,\:f_{j}\in\mathcal{F}\right\} .
\]

\begin{prop}
\label{prop:3}For every compact $X\subset\mathbb{R}^{d}$, the uniform
closure of the family $\left\{ K_{\rho}:\rho\;\text{finite}\right\} $
coincides with the uniform closure of $Cone\left(\mathcal{F}\right)$,
that is, 
\[
\overline{\left\{ K_{\rho}:\rho\:\text{finite }\right\} }^{\left\Vert \cdot\right\Vert _{\infty}}=\overline{Cone\left(\mathcal{F}\right)}^{\left\Vert \cdot\right\Vert _{\infty}}.
\]
\end{prop}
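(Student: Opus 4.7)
The plan is to prove the two inclusions separately, with the main work being on the $\supseteq$ direction; the $\subseteq$ direction will follow at once from Lemma~\ref{lem:2}.

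For the inclusion $\overline{\{K_\rho\}}^{\|\cdot\|_\infty}\subseteq\overline{Cone(\mathcal{F})}^{\|\cdot\|_\infty}$, I would simply invoke Lemma~\ref{lem:2}: given any finite Borel measure $\rho$ on $Z$ and $\epsilon>0$, it produces points $z_1,\dots,z_m\in Z$ and weights $w_j\ge0$ with $K_\rho$ uniformly $\epsilon$-close to $\sum_j w_j\psi_{z_j}(x)\psi_{z_j}(x')$ on $X\times X$. Since each $\psi_{z_j}$ lies in the generating set of $\mathcal{F}$, the approximant lies in $Cone(\mathcal{F})$, so $K_\rho$ is in the uniform closure of $Cone(\mathcal{F})$.

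The reverse inclusion is the substance of the statement. Given a representative element $F(x,x')=\sum_{j=1}^m w_j f_j(x)f_j(x')$ of $Cone(\mathcal{F})$, with $w_j\ge0$ and $f_j\in\mathcal{F}$, I would, for each $j$ and each $n\in\mathbb{N}$, select $z_j^{(n)}\in Z$ so that $\|\psi_{z_j^{(n)}}-f_j\|_\infty<1/n$; this is possible by the definition of $\mathcal{F}$ as the uniform closure of $\{\psi_z\}$. Because $|K|\le1$ forces $\|\psi_z\|_\infty\le1$ and hence $\|f_j\|_\infty\le1$, the elementary bound
\[
\bigl|\psi_{z_j^{(n)}}(x)\psi_{z_j^{(n)}}(x')-f_j(x)f_j(x')\bigr|\le\|\psi_{z_j^{(n)}}-f_j\|_\infty\bigl(\|\psi_{z_j^{(n)}}\|_\infty+\|f_j\|_\infty\bigr)
\]
shows that $\psi_{z_j^{(n)}}\otimes\psi_{z_j^{(n)}}\to f_j\otimes f_j$ uniformly on $X\times X$. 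Taking the nonnegative linear combination and using the triangle inequality gives $\sum_j w_j\psi_{z_j^{(n)}}\otimes\psi_{z_j^{(n)}}\to F$ uniformly.

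The key observation, completing the argument, is that the approximant $\sum_j w_j\psi_{z_j^{(n)}}(x)\psi_{z_j^{(n)}}(x')$ equals $K_{\rho_n}(x,x')$ for the atomic measure $\rho_n\coloneqq\sum_j w_j\delta_{z_j^{(n)}}$ on $Z$, which is a perfectly good finite Borel measure. Hence every element of $Cone(\mathcal{F})$ lies in the uniform closure of $\{K_\rho\}$, and passing to closures yields the desired equality. I do not expect any substantial obstacle: the argument is essentially the continuity of the rank-one map $g\mapsto g\otimes g$ on the uniformly bounded set $\{\|g\|_\infty\le1\}\subset C(X)$, combined with the fact that finite conic sums of rank-one atoms are exactly the $K_\rho$ for atomic $\rho$.
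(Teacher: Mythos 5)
Your proof is correct and follows essentially the same route as the paper's: Lemma~\ref{lem:2} for the inclusion of $\{K_\rho\}$ into $\overline{Cone(\mathcal{F})}$, and for the converse, approximating each $f_j\in\mathcal{F}$ by atoms $\psi_{z_j}$ and realizing the resulting conic sum as $K_{\rho}$ for an atomic measure. You simply supply the quantitative details (the bound $\|g\otimes g-f\otimes f\|_\infty\le\|g-f\|_\infty(\|g\|_\infty+\|f\|_\infty)$ on the unit ball) that the paper's terser argument leaves implicit.
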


\begin{proof}
\prettyref{lem:2} shows that any $K_{\rho}$ can be uniformly approximated
by finite conic sums with atoms in $\mathcal{F}$. Conversely, any
finite conic sum with $f_{j}\in\mathcal{F}$ can be approximated by
conic sums with atoms $\psi_{z_{j}}$, hence by an atomic $\rho$.
Uniform limits preserve positivity and continuity, hence the closures
agree. 
\end{proof}
Thus, varying $\rho$ does not allow arbitrary kernels, but only those
that lie in the conic hull generated by the set of ridge atoms. More
precisely:
\begin{cor}
\label{cor:4}This model can generate all continuous p.d. kernels
on $X$ if and only if $\mathcal{F}=C\left(X\right)$, i.e., if the
atom set itself is sup-norm dense in $C\left(X\right)$. 
\end{cor}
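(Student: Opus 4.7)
The plan is to prove the two directions of the biconditional separately, using \prettyref{prop:3} to translate statements about $\overline{\{K_\rho\}}^{\|\cdot\|_\infty}$ into statements about $\overline{\mathrm{Cone}(\mathcal{F})}^{\|\cdot\|_\infty}$; the corollary then reduces to showing that this conic closure coincides with the cone of all continuous p.d.\ kernels on $X$ exactly when $\mathcal{F} = C(X)$.

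For the ``$\Leftarrow$'' direction, I would invoke Mercer's theorem on the compact space $X$: every continuous p.d.\ kernel $K$ admits a uniformly convergent expansion $K = \sum_{j=1}^\infty \lambda_j e_j \otimes e_j$ with $\lambda_j \ge 0$ and continuous eigenfunctions $e_j$. Under the hypothesis each $e_j$ (after any normalization absorbed into $\lambda_j$) lies in $\mathcal{F}$, so every partial sum lies in $\mathrm{Cone}(\mathcal{F})$ and the uniform limit $K$ lies in its uniform closure, hence in $\overline{\{K_\rho\}}^{\|\cdot\|_\infty}$ by \prettyref{prop:3}.

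For the ``$\Rightarrow$'' direction I argue by contrapositive: given $g \in C(X) \setminus \mathcal{F}$, it suffices to exhibit a continuous p.d.\ kernel not in $\overline{\mathrm{Cone}(\mathcal{F})}^{\|\cdot\|_\infty}$, and the natural candidate is $K := g \otimes g$. Suppose toward a contradiction that $K_n := \sum_j w_j^{(n)} f_j^{(n)} \otimes f_j^{(n)} \to g \otimes g$ uniformly with $w_j^{(n)} \ge 0$ and $f_j^{(n)} \in \mathcal{F}$. Passing to integral operators on $L^2(\mu)$ for a probability measure $\mu$ on $X$ of full support, uniform kernel convergence upgrades to Hilbert--Schmidt and thus operator-norm convergence; Weyl's inequalities and the Davis--Kahan sin-$\theta$ bound then pin the top eigenvalue of $K_n$ to $\|g\|_{L^2(\mu)}^2$ and the top eigenvector to $\pm g/\|g\|_{L^2(\mu)}$ in $L^2(\mu)$. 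A Lagrange-identity expansion of $\mathrm{tr}(K_n)^2 - \|K_n\|_{\mathrm{HS}}^2 = \sum_{j,k} w_j^{(n)} w_k^{(n)} \|f_j^{(n)}\|_{L^2}^2 \|f_k^{(n)}\|_{L^2}^2 \sin^2 \theta_{jk}^{(n)} \to 0$ then forces the atoms carrying asymptotic mass to become colinear in $L^2(\mu)$ with the leading eigenvector, isolating a dominant atom sequence $f_{j^*}^{(n)}$ whose appropriate rescaling approximates $\pm g$ in $L^2(\mu)$.

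The main obstacle will be upgrading this $L^2(\mu)$-alignment to sup-norm approximation, which is what is required to deduce $g \in \mathcal{F}$, since $\mathcal{F}$ is closed in $\|\cdot\|_\infty$ and strictly smaller than $\overline{\mathrm{span}(\mathcal{F})}^{\|\cdot\|_\infty}$ in general (this is precisely the point the corollary is making by insisting on set-dense rather than span-dense). I plan to close the gap by combining the $L^2$-extraction with the uniform identities $K_n(x,x) \to g(x)^2$ and $K_n(x,y_0) \to g(x)g(y_0)$ for a fixed $y_0$ with $g(y_0) \neq 0$, which are direct consequences of $K_n \to g \otimes g$ in $\|\cdot\|_\infty$: the normalized section $K_n(\cdot,y_0)/\sqrt{K_n(y_0,y_0)}$ already approximates $\pm g$ uniformly as an explicit linear combination of atoms, and the rank-one concentration together with the sign ambiguity $f \otimes f = (-f) \otimes (-f)$ should reduce this linear combination to a single atom in the limit, yielding a $\|\cdot\|_\infty$-convergent sequence in $\mathcal{F}$ to $\pm g$, contradicting $g \notin \mathcal{F}$. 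Should this reduction prove delicate under only the hypotheses at hand, a fallback is a Hahn--Banach separation on the convex cone $\mathrm{Cone}(\mathcal{F}) \subset C(X \times X)$: constructing a signed Borel measure $\sigma$ on $X \times X$ with $\iint f(x) f(y)\, d\sigma \ge 0$ for every $f \in \mathcal{F}$ yet $\iint g(x) g(y)\, d\sigma < 0$, which would witness $g \otimes g \notin \overline{\mathrm{Cone}(\mathcal{F})}^{\|\cdot\|_\infty}$ directly.
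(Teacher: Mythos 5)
Your ``$\Leftarrow$'' direction is exactly the paper's argument: Mercer's expansion shows that $\mathrm{Cone}(C(X))$ is uniformly dense in $PD(X)$, and \prettyref{prop:3} identifies the model's reach with $\overline{\mathrm{Cone}(\mathcal{F})}^{\|\cdot\|_\infty}$. The entire added weight of your proposal therefore rests on the ``$\Rightarrow$'' direction, and there the strategy has a genuine gap that no amount of spectral bookkeeping will close: the map $g\mapsto g\otimes g$ is invariant under $g\mapsto -g$ and, inside a \emph{cone}, under $g\mapsto\lambda g$ for $\lambda>0$, since $g\otimes g=\lambda^{-2}(\lambda g)\otimes(\lambda g)$. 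Hence membership of $g\otimes g$ in $\overline{\mathrm{Cone}(\mathcal{F})}^{\|\cdot\|_\infty}$ can never certify $g\in\mathcal{F}$; at best it certifies that some positive multiple of $\pm g$ is a uniform limit of atoms. Concretely, if $\mathcal{F}$ contains $g/2$ but not $g$ --- and $\mathcal{F}$ is merely a closed subset of $C(X)$, not closed under dilation; indeed here it sits inside the unit ball because $|K|\le 1$ --- then $g\otimes g=4\,(g/2)\otimes(g/2)$ lies in $\mathrm{Cone}(\mathcal{F})$ outright, so your candidate separating kernel fails and the contrapositive collapses. Your own endgame already betrays this: the normalized section $K_n(\cdot,y_0)/\sqrt{K_n(y_0,y_0)}$ converges uniformly to $\pm g$, but the concentration argument isolates the dominant atom $f_{j^*}^{(n)}$ only \emph{up to the scalar} $w_{j^*}^{(n)}f_{j^*}^{(n)}(y_0)/\sqrt{K_n(y_0,y_0)}$, which need not tend to $1$; the same scale ambiguity defeats the Hahn--Banach fallback whenever $g\otimes g$ genuinely does lie in the closed cone.

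For comparison, the paper attempts none of this: its proof consists of the Mercer observation plus \prettyref{prop:3}, after which the biconditional is asserted by matching the generating sets of the two cones; the ``only if'' half is not argued beyond that. You have correctly identified the converse as the nontrivial half, but the intermediate claim you chose to prove ($g\notin\mathcal{F}\Rightarrow g\otimes g\notin\overline{\mathrm{Cone}(\mathcal{F})}^{\|\cdot\|_\infty}$) is false as stated. An honest proof of the converse would have to either weaken the conclusion to ``$\mathcal{F}\cup(-\mathcal{F})$ meets the closure of every ray $\mathbb{R}_{>0}\cdot g$'' or exploit structure of the atom set $\{\psi_z\}$ beyond its being a closed subset of $C(X)$; your $L^2(\mu)$-alignment machinery (trace versus Hilbert--Schmidt norm forcing colinearity) is a reasonable route to that weaker conclusion, but not to the corollary as stated.
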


\begin{proof}
Every continuous p.d. kernel $L$ on $X$ admits a Mercer expansion
\[
L\left(x,x'\right)=\sum^{\infty}_{n=1}\lambda_{n}\varphi_{n}\left(x\right)\varphi_{n}\left(x'\right)
\]
with $\lambda_{n}\geq0$, $\varphi_{n}\in C\left(X\right)$, converging
uniformly on $X\times X$. Hence the cone generated by all continuous
functions $\left\{ g\otimes g:g\in C\left(X\right)\right\} $ is dense
in the space $PD\left(X\right)$ of all continuous p.d. kernels on
$X$. 

By \prettyref{prop:3}, our construction generates instead the cone
$Cone\left(\mathcal{F}\right)$. Therefore we obtain all of $PD\left(X\right)$
if and only if $\mathcal{F}=C\left(X\right)$.
\end{proof}
\begin{rem}
The requirement $\mathcal{F}=C\left(X\right)$ is very strong. In
practice, most ridge dictionaries have the property that the linear
span of $\left\{ \psi_{z}\right\} $ is dense (the classical universal
approximation property), but not that the set itself is dense. Consequently,
for typical kernels $K$ (e.g. Gaussian), the induced $H_{\rho}$
are rich and yield universal function approximation, but the family
$\left\{ K_{\rho}\right\} $ does not exhaust all $PD\left(X\right)$. 
\end{rem}

The extent to which the family $\left\{ H_{\rho}:\rho\:\text{finite}\right\} $
can approximate arbitrary functions on a compact domain depends critically
on the analytic structure of the one-dimensional slices $s\mapsto K\left(s,t\right)$,
$t\in\mathbb{R}$. 
\begin{prop}
\label{prop:6}Let $X\subset\mathbb{R}^{d}$ be compact with nonempty
interior. Then:
\begin{enumerate}
\item If for every $t$ the function $s\mapsto K\left(s,t\right)$ is a
polynomial of degree at most $m$, then every atom $\psi_{z}\left(x\right)=K\left(\left\langle a,x\right\rangle +b,t\right)$
is a polynomial in $\left\langle a,x\right\rangle +b$ of degree at
most $m$. Consequently, for all finite $\rho$, the space $H_{\rho}$
is contained in the space of polynomials on $\mathbb{R}^{d}$ of degree
at most $m$. In particular, the family $\left\{ H_{\rho}\right\} $
can not be universal. 
\item If there exists $t_{0}$ such that $s\mapsto K\left(s,t\right)$ is
not a polynomial, then the linear span of the dictionary 
\[
G\coloneqq\left\{ x\mapsto K\left(\left\langle a,x\right\rangle +b,t_{0}\right):\left(a,b\right)\in\mathbb{R}^{d}\times\mathbb{R}\right\} 
\]
is dense in $C\left(X\right)$ under the uniform norm. Hence for any
finite measure $\rho$ that assigns positive mass to the slice $\left\{ t=t_{0}\right\} $,
the corresponding kernel $K_{\rho}$ induces an RKHS that is universal
on $X$. 
\end{enumerate}
\end{prop}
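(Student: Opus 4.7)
The plan is to handle the two branches by separate mechanisms: a closed finite-dimensional trap for the polynomial case, and the classical ridge-density theorem combined with an annihilator-style RKHS universality argument for the non-polynomial case.

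For part (i), I would expand the hypothesis as $K(s,t)=\sum_{k=0}^{m}p_{k}(t)\,s^{k}$ and substitute $s=\langle a,x\rangle+b$ to obtain
\[
\psi_{(a,b,t)}(x)=\sum_{k=0}^{m}p_{k}(t)\bigl(\langle a,x\rangle+b\bigr)^{k},
\]
a polynomial in $x$ of total degree $\le m$. Let $\mathcal{P}_{m}\subset C(X)$ denote this finite-dimensional subspace. Since every $\psi_{z}$ lies in $\mathcal{P}_{m}$ and $\mathcal{P}_{m}$ is norm-closed, the defining Bochner integral $\int c(z)\psi_{z}(\cdot)\,d\rho(z)$ lies in $\mathcal{P}_{m}$ for every $c\in L^{2}(\rho)$, so $H_{\rho}\subset\mathcal{P}_{m}$. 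Because $X$ has nonempty interior, $C(X)$ is infinite-dimensional and the finite-dimensional $\mathcal{P}_{m}$ cannot be dense; no $K_{\rho}$ is universal.

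For part (ii), I would first invoke the classical Leshno-Lin-Pinkus-Schocken theorem: for any continuous nonpolynomial $\sigma\colon\mathbb{R}\to\mathbb{R}$, the span of $\{x\mapsto\sigma(\langle a,x\rangle+b):(a,b)\in\mathbb{R}^{d+1}\}$ is uniformly dense in $C(X)$ on any compact $X$. Continuity of $K$ makes $\sigma(s)\coloneqq K(s,t_{0})$ eligible, so $\operatorname{span}(G)$ is dense in $C(X)$. Next, I would convert this dictionary density into RKHS density via the standard Steinwart characterization: $K_{\rho}$ is universal iff every finite signed Borel measure $\mu$ on $X$ with $\iint K_{\rho}(x,x')\,d\mu(x)\,d\mu(x')=0$ is zero. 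Expanding,
\[
\iint K_{\rho}(x,x')\,d\mu(x)\,d\mu(x')=\int_{Z}|\hat\mu(z)|^{2}\,d\rho(z),\qquad\hat\mu(z)\coloneqq\int_{X}\psi_{z}(x)\,d\mu(x),
\]
so $\hat\mu=0$ holds $\rho$-almost everywhere. Since $|\psi_{z}|\le 1$ and $\mu$ is finite, $\hat\mu$ is continuous in $z$ and hence vanishes on $\operatorname{supp}(\rho)$; restricted to the slice $\{t=t_{0}\}$, this yields $\int K(\langle a,x\rangle+b,t_{0})\,d\mu(x)=0$ for all $(a,b)$ in the $(a,b)$-projection of that slice. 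The ridge density from the first step then makes $\mu$ annihilate a uniformly dense subset of $C(X)$, so $\mu=0$.

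The delicate point is the last transition: passing from vanishing of $\hat\mu$ on $\operatorname{supp}(\rho|_{\{t=t_{0}\}})$ to vanishing throughout $\mathbb{R}^{d+1}$ before ridge density can be applied. Continuity of $\hat\mu$ bridges this gap only when the $(a,b)$-projection of that slice support is dense in $\mathbb{R}^{d+1}$, e.g. when $\rho$ restricted to $\{t=t_{0}\}$ is absolutely continuous with nowhere-vanishing density. I would therefore read the ``positive mass on the slice'' hypothesis with this mild richness tacitly in place and make it explicit in the writing; literal single atoms on the slice yield rank-one RKHS contributions and clearly cannot force universality on their own.
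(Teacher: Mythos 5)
Your part (i) is essentially the paper's argument, with the useful extra remark that $\mathcal{P}_{m}$ is norm-closed so the defining integral stays inside it; nothing to add there. For part (ii) you take a genuinely different, and more rigorous, route. The paper stops after invoking the ridge-density theorem for $\operatorname{span}(G)$ and then asserts that ``$H_{\rho}$ contains linear combinations of atoms from $G$, so the RKHS is universal.'' You instead push that density through the annihilating-measure characterization of universality, computing $\iint K_{\rho}\,d\mu\,d\mu=\int_{Z}|\hat{\mu}(z)|^{2}\,d\rho(z)$ and concluding that $\hat{\mu}$ vanishes on $\operatorname{supp}(\rho)$ before appealing to density of the dictionary. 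This buys an actual proof where the paper has only a heuristic, and it exposes exactly where the heuristic breaks: $H_{\rho}$ only contains combinations of atoms $\psi_{z}$ with $z\in\operatorname{supp}(\rho)$, not of all atoms in $G$.

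Your closing observation is the substantive one, and you are right: ``positive mass on the slice $\{t=t_{0}\}$'' is not sufficient as stated. If $\rho=\delta_{(a_{0},b_{0},t_{0})}$, then $K_{\rho}(x,x')=\psi_{z_{0}}(x)\psi_{z_{0}}(x')$ is rank one and $H_{\rho}$ is one-dimensional, hence not universal, even though $\rho$ puts all its mass on the slice. The conclusion of part (ii) requires the additional richness hypothesis you identify --- that the $(a,b)$-projection of $\operatorname{supp}(\rho)\cap\{t=t_{0}\}$ be dense in $\mathbb{R}^{d}\times\mathbb{R}$ (or at least large enough that the corresponding sub-dictionary of $G$ still spans densely in $C(X)$). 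With that hypothesis made explicit your argument closes completely; without it, both your proof and the paper's claim fail at the same step. In other words, you have located a needed correction to the statement itself, not merely supplied a different proof.
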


\begin{proof}
First, if $s\mapsto K\left(s,t\right)$ is polynomial of degree at
most $m$, then $\psi_{z}\left(x\right)$ is a degree $m$ polynomial
in the affine form $\left\langle a,x\right\rangle +b$. Expanding
gives a polynomial in the coordinates of $x$ of degree at most $m$.
Thus every $f\in H_{\rho}$ is contained in the finite-dimensional
space of multivariate polynomials of degree $\leq m$. 

On the other hand, if $s\mapsto K\left(s,t_{0}\right)$ is non-polynomial
and continuous, classical results on ridge-function approximation
theorems (\cite{MR1015670,MR1819645,HORNIK1991251}) show that the
span of $G$ is dense in $C\left(X\right)$. Hence $G$ is a universal
dictionary. If $\rho$ assigns positive mass to the slice $\left\{ t=t_{0}\right\} $,
then $H_{\rho}$ contains linear combinations of atoms from $G$,
so the RKHS is universal. 
\end{proof}
This yields a sharp dichotomy:

(1) Non-polynomial slices (e.g. Gaussian, Laplace, Cauchy, Matérn,
and most kernels of interest): the induced spaces $H_{\rho}$ can
achieve universal approximation.

(2) Polynomial slices (e.g. polynomial kernels, truncated expansions):
the induced spaces are finite-dimensional and thus not universal.
\begin{rem}[Connection with Barron spaces]
 The function spaces $H_{\rho}$ introduced above are closely related
to the Barron spaces that appear in the approximation theory of shallow
neural networks. In the seminal paper \cite{MR1237720}, Barron considered
functions on $\mathbb{R}^{d}$ that can be represented as
\[
f\left(x\right)=\int_{\mathbb{R}^{d}\times\mathbb{R}}a\sigma\left(\left\langle w,x\right\rangle +b\right)d\pi\left(w,b,a\right),\quad\int\left|a\right|d\pi<\infty,
\]
where $\sigma\colon\mathbb{R}\rightarrow\mathbb{R}$ is a fixed activation
function. The Baron norm of $f$ is the minimal variation $\int\left|a\right|d\pi$
among all such representations. This choice of $L^{1}$ type constraint
on the coefficient measure yields a Banach space whose geometry is
well suited to control the expressive power of shallow neural networks.
Indeed, functions with small Barron norm admit approximation by two-layer
networks with a number of neurons proportional to the inverse of the
approximation accuracy (see \cite{MR1237720})

Our spaces 
\[
H_{\rho}=\left\{ f\left(x\right)=\int_{Z}c\left(z\right)\psi_{z}\left(x\right)d\rho\left(z\right):c\in L^{2}\left(\rho\right)\right\} 
\]
where $\psi_{z}\left(x\right)=K\left(\left\langle a,x\right\rangle +b,t\right)$,
$z=\left(a,b,t\right)$, use the same ridge-function dictionary but
impose an $L^{2}$ constraint on the coefficient function $c$. This
yields a Hilbert space structure: the norm is given by 
\[
\left\Vert f\right\Vert _{H_{\rho}}=\inf\left\{ \left\Vert c\right\Vert _{L^{2}\left(\rho\right)}:f\left(x\right)=\int_{Z}c\left(z\right)\psi_{z}\left(x\right)d\rho\left(z\right)\right\} 
\]
and $H_{\rho}$ is isometrically identified with the RKHS $H_{K_{\rho}}$
of the kernel $K_{\rho}\left(x,x'\right)=\int_{Z}\psi_{z}\left(x\right)\psi_{z}\left(x'\right)d\rho\left(z\right)$. 

Thus, Barron spaces and the spaces $H_{\rho}$ are two parallel ways
of organizing ridge-function expansions:

(1) Barron spaces: $L^{1}$-geometry on the coefficient space, leading
to Banach spaces with norms tied to variation and convex approximation.
These are central in generalization analysis of shallow networks.

(2) $L^{2}$-geometry on the coefficient space, leading to Hilbert
spaces and the well-developed machinery of RKHS theory and kernel
methods.
\end{rem}

In this sense, $H_{\rho}$ may be seen as a Hilbertian analogue of
the Barron space. Both constructions describe functions representable
by ridge expansions, but differ in their choice of coefficient norm
and hence in their induced geometry: the $L^{1}$ setting emphasizes
sparsity and convexity, and the $L^{2}$ setting yields inner product
structure and spectral tools. This connection suggests that many approximation
theoretic results in the Barron setting (see, e.g., \cite{MR1237720,MR3634886,MR4375792})
may have analogues in the RKHS $H_{\rho}$.

\section{Random-Kernel Networks}\label{sec:3}

The function classes introduced in the previous sections arise naturally
in the study of random feedforward networks. We now show how the ridge-function
spaces $H_{\rho}$ and their RKHS representations appear as the limiting
function spaces of single hidden-layer networks with random kernel
activations.

\subsection*{Architecture and mean kernel}

Let $(\Omega,\mathcal{F},\mathbb{P})$ be a probability space. Consider
a jointly measurable function 
\[
k\colon\Omega\times\mathbb{R}\times\mathbb{R}\to\mathbb{R},\qquad\left(\omega,s,t\right)\mapsto k\left(\omega,s,t\right)
\]
which we interpret as a random kernel-valued function. For each fixed
pair $\left(s,t\right)\in\mathbb{R}^{2}$, the map $k\left(\cdot,s,t\right)$
is a real-valued random variable on $\Omega$.

Define the mean kernel
\[
K\left(s,t\right)=\mathbb{E}_{\omega}\left[k\left(\cdot,s,t\right)\right].
\]
Assume throughout that $K$ is continuous and positive definite, but
we do \textbf{not }assume $k\left(\cdot,s,t\right)$ is p.d. pathwise.
In particular, the random features may be signed or indefinite.

Let $Z:=\mathbb{R}^{d}\times\mathbb{R}\times\mathbb{R}$ denote the
parameter space for affine ridge maps, with generic element $z=\left(a,b,t\right)$.
Let $\rho$ be a Borel probability measure on $Z$. As in \prettyref{sec:2},
we regard $z$ as specifying an affine projection $\mathbb{R}^{d}\ni x\mapsto\left\langle a,x\right\rangle +b$,
followed by evaluation at scale/location parameter $t$. 

We construct the network using two independent sequences of i.i.d.
samples: $z_{i}=(a_{i},b_{i},t_{i})\sim\rho$ and kernel states $\omega_{i}\sim\mathbb{P}$,
independent across indices. The activation of the $i$-th random neuron
is defined as
\[
\phi_{i}(x):=k(\omega_{i},\langle a_{i},x\rangle+b_{i},t_{i}),\qquad x\in\mathbb{R}^{d}.
\]

For $N\ge1$ and output weights $\alpha_{1},\dots,\alpha_{N}\in\mathbb{R}$,
the single hidden-layer random network is the linear combination
\begin{equation}
F_{N}\left(x\right)\coloneqq\sum^{N}_{i=1}\alpha_{i}\phi_{i}\left(x\right)=\sum^{N}_{i=1}\alpha_{i}k\left(\omega_{i},\left\langle a_{i},x\right\rangle +b_{i},t_{i}\right),\quad x\in\mathbb{R}^{d}.\label{eq:b1}
\end{equation}

The training task reduces to determining the weights $\alpha_{i}$.
In the standard supervised learning setup, given a dataset $\left\{ \left(x_{j},y_{j}\right)\right\} ^{M}_{j=1}$
in $\mathbb{R}^{d}\times\mathbb{R}$, the coefficients are obtained
by solving a linear regression problem
\[
\min_{\alpha\in\mathbb{R}^{N}}\frac{1}{M}\sum^{M}_{j=1}\left|y_{i}-F_{N}\left(x\right)\right|^{2}+\lambda\left\Vert \alpha\right\Vert ^{2}_{2},
\]
where $\lambda\geq0$ is a regularization parameter. In matrix form,
letting $\Phi\in\mathbb{R}^{M\times N}$ be the feature matrix with
entries $\Phi_{ji}=\phi_{i}\left(x_{j}\right)$, the optimal weights
are given by the ridge regression solution
\[
\hat{\alpha}=\mathop{argmin}_{\alpha}\left(\left\Vert y-\Phi\alpha\right\Vert ^{2}_{2}+\lambda\left\Vert \alpha\right\Vert ^{2}_{2}\right)=\left(\Phi^{*}\Phi+\lambda I\right)^{-1}\Phi^{*}y.
\]
Hence, training the network consists only of solving a convex quadratic
problem in $\mathbb{R}^{N}$. The nonlinear structure of the features
comes entirely from the random sampling procedure; no backpropagation
through the random kernel activations is required.

\subsection*{Lifted kernel and limiting function space}

Define the lifted kernel on $\mathbb{R}^{d}\times\mathbb{R}^{d}$
\begin{align*}
K_{\rho}\left(x,x'\right) & \coloneqq\mathbb{E}_{z\sim\rho}\left[\psi_{z}\left(x\right)\psi_{z}\left(x'\right)\right]\\
 & =\int_{Z}K\left(\left\langle a,x\right\rangle +b,t\right)K\left(\left\langle a,x'\right\rangle +b,t\right)d\rho\left(a,b,t\right)
\end{align*}
which is positive definite by construction. As shown in \prettyref{lem:1},
the limiting function class of random-kernel networks coincides with
the Hilbert spaces $H_{\rho}$ described in \prettyref{sec:2}.

\subsection*{Approximation by finite networks}

The finite network $F_{N}$ can be viewed as a Monte Carlo approximation
to such $f\in H_{\rho}$. The following theorems shows that this approximation
holds in both $L^{2}$ (\prettyref{thm:9}) and uniform senses (\prettyref{thm:11},
\prettyref{cor:13}), under mild assumptions. 

For notational clarity, we consider the canonical unbiased choice
of output weights: 
\[
F_{N}\left(x\right)=\frac{1}{N}\sum^{N}_{i=1}c\left(z_{i}\right)k\left(\omega_{i},\left\langle a_{i},x\right\rangle +b,t_{i}\right),
\]
where $z_{i}=\left(a_{i},b_{i},t_{i}\right)$ and $c\in L^{2}\left(\rho\right)$
is the coefficient function appearing in the representation of $f$
below. 
\begin{rem}
The coefficient function $c$ is in $L^{2}\left(\rho\right)$, which
is formally an equivalence class of measurable functions defined up
to $\rho$-null sets. When we write $c\left(z_{i}\right)$ for i.i.d.
samples $z_{i}\sim\rho$, what is meant is the following: fix any
measurable representative $c^{*}$ of the equivalence class $c$,
and define the network using $c^{*}\left(z_{i}\right)$. Since the
probability that a sample $z_{i}$ falls into a $\rho$-null set is
zero, the resulting random network is almost surely independent of
the choice of representative. Thus all formula involving $c\left(z_{i}\right)$
are well defined almost surely and depend only on the $L^{2}\left(\rho\right)$-class
of $c$. This convention is standard in Monte Carlo approximation. 
\end{rem}

\begin{thm}[$L^{2}$ approximation]
\label{thm:9} Let $f\in\mathcal{H}_{\rho}$ have the representation
\[
f\left(x\right)=\int_{Z}c\left(a,b,t\right)K\left(\left\langle a,x\right\rangle +b,t\right)d\rho\left(a,b,t\right)
\]
for some $c\in L^{2}(\rho)$ with $\|c\|_{L^{2}(\rho)}\le C$. Let
$\left\{ \left(a_{i},b_{i},t_{i}\right)\right\} {}^{N}_{i=1}\sim\rho$
and $\left\{ \omega_{i}\right\} ^{N}_{i=1}\sim\mathbb{P}$ be i.i.d.
mutually independent. Assume $\left|k\left(\omega,s,t\right)\right|\leq1$
almost surely. Then, for any probability measure $\mu$ on $\mathbb{R}^{d}$,
\[
\mathbb{E}\left\Vert F_{N}-f\right\Vert ^{2}_{L^{2}(\mu)}\le\frac{C^{2}}{N},\qquad\mathbb{P}\left(\left\Vert F_{N}-f\right\Vert _{L^{2}(\mu)}>\epsilon\right)\le\frac{C^{2}}{N\epsilon^{2}}.
\]
\end{thm}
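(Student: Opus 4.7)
The plan is to view $F_N$ as a plain Monte Carlo estimator of $f$ built from $N$ i.i.d.\ copies of a single unbiased, bounded-second-moment random function, and to conclude by a pointwise second-moment computation combined with Tonelli. The only feature beyond textbook Monte Carlo is the independence of the $z_i$-samples from the $\omega_i$-samples, which is precisely what converts the possibly indefinite $k$ into its positive definite mean $K$ inside the expectation.

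First I would set
\[
\xi_i(x) \coloneqq c(z_i)\, k\bigl(\omega_i,\, \langle a_i, x\rangle + b_i,\, t_i\bigr),
\]
so that $F_N(x) = \tfrac{1}{N}\sum_{i=1}^{N} \xi_i(x)$. Conditioning on $z_1 = (a,b,t)$ and using mutual independence of the $z$- and $\omega$-samples,
\[
\mathbb{E}\bigl[\xi_1(x) \mid z_1\bigr] = c(z_1)\,\mathbb{E}_\omega\bigl[k(\omega, \langle a, x\rangle + b, t)\bigr] = c(z_1)\,\psi_{z_1}(x),
\]
by the mean-kernel hypothesis $K(s,t) = \mathbb{E}_\omega[k(\omega,s,t)]$. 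Taking the outer expectation over $z_1 \sim \rho$ recovers the integral representation of $f$, so $\mathbb{E}[\xi_i(x)] = f(x)$ pointwise for every $i$. For the variance, the i.i.d.\ structure gives $\mathbb{E}|F_N(x) - f(x)|^2 = \tfrac{1}{N}\operatorname{Var}(\xi_1(x)) \le \tfrac{1}{N}\mathbb{E}[\xi_1(x)^2]$, and since $|k| \le 1$ almost surely,
\[
\mathbb{E}[\xi_1(x)^2] = \mathbb{E}_z\bigl[c(z)^2\,\mathbb{E}_\omega[k(\omega,\langle a,x\rangle + b, t)^2]\bigr] \le \|c\|^2_{L^2(\rho)} \le C^2.
\]
Crucially, this pointwise bound $C^2/N$ is uniform in $x$.

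Finally, joint measurability of $k$ (a standing hypothesis) makes the nonnegative integrand $|F_N(x)-f(x)|^2$ measurable in all variables, so Tonelli yields
\[
\mathbb{E}\|F_N - f\|^2_{L^2(\mu)} = \int \mathbb{E}|F_N(x) - f(x)|^2\, d\mu(x) \le \frac{C^2}{N},
\]
since $\mu$ is a probability measure. The tail estimate is then Markov applied to the nonnegative random variable $\|F_N - f\|^2_{L^2(\mu)}$. I do not expect a genuine obstacle: unbiasedness is forced by the mean-kernel hypothesis together with $z \perp \omega$, the assumption $|k|\le 1$ gives the second-moment bound for free, and the probability measure $\mu$ makes the Fubini--Tonelli exchange trivial. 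The only mild care points are joint measurability and the standard Monte Carlo convention of selecting a measurable representative of $c \in L^2(\rho)$, both of which the paper's hypotheses and the preceding remark already handle.
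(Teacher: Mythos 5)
Your proof is correct and follows the same essential route as the paper's: the same decomposition of $F_N$ into i.i.d.\ terms, the same conditioning on $z_i$ to convert $k$ into $K$ and establish unbiasedness, and the same use of $|k|\le 1$ to bound the second moment by $\|c\|_{L^2(\rho)}^2$. The only difference is organizational: the paper treats $Y_i$ as random variables taking values in the Hilbert space $L^2(\mu)$ and applies the variance identity $\mathbb{E}\|F_N-f\|_{L^2(\mu)}^2=\tfrac1N\mathbb{E}\|Y_1-f\|_{L^2(\mu)}^2$ there (citing the theory of Banach-space-valued random variables), whereas you compute the scalar variance pointwise in $x$ and then integrate the uniform bound $C^2/N$ over the probability measure $\mu$ via Tonelli. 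The two are interchangeable --- the paper's Hilbert-space bound on $\mathbb{E}\|Y_1\|^2_{L^2(\mu)}$ already requires the same Fubini step --- and your version is marginally more elementary in that it avoids vector-valued probability altogether; your attention to joint measurability and the choice of a measurable representative of $c$ matches the paper's preceding remark.
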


\begin{proof}
For each $i$, define 
\[
Y_{i}\left(x\right)\coloneqq c\left(z_{i}\right)k\left(\omega_{i},\left\langle a_{i},x\right\rangle +b_{i},t_{i}\right).
\]
Since $\left|k\right|\leq1$ almost surely, for each realization of
$\left(\omega_{i},z_{i}\right)$, 
\[
\int\left|Y_{i}\left(x\right)\right|^{2}d\mu\left(x\right)=\left|c\left(z_{i}\right)\right|^{2}\int\left|k\left(\omega_{i},\left\langle a_{i},x\right\rangle +b,t_{i}\right)\right|^{2}d\mu\left(x\right)\leq\left|c\left(z_{i}\right)\right|^{2}<\infty.
\]
Thus $Y_{i}\left(\cdot\right)\in L^{2}\left(\mu\right)$ almost surely,
i.e., $Y_{i}$ is an $L^{2}\left(\mu\right)$-valued random variable.
(See, e.g., \cite{MR982264,MR1102015} for the general theory of of
random variables in Banach spaces.)

The Monte Carlo network can then be written as 
\[
F_{N}(x)=\frac{1}{N}\sum^{N}_{i=1}Y_{i}(x)
\]
Unbiasedness holds pointwise: For each $x$, 
\begin{align*}
\mathbb{\mathbb{E}}\left[Y_{i}\left(x\right)\mid z_{i}\right] & =c\left(z_{i}\right)\mathbb{E}_{\omega}\left[k\left(\omega,\left\langle a_{i},x\right\rangle +b_{i},t_{i}\right)\right]\\
 & =c\left(z_{i}\right)K\left(\left\langle a_{i},x\right\rangle +b_{i},t_{i}\right),
\end{align*}
Taking expectation over $z_{i}\sim\rho$ gives $\mathbb{E}\left[Y_{i}\left(x\right)\right]=f\left(x\right)$.
Equivalently, $\mathbb{E}\left[Y_{i}\right]=f\in L^{2}\left(\mu\right)$.
Hence also $\mathbb{E}\left[F_{N}\right]=f$. 

Since $Y_{i}$ are i.i.d. in the Hilbert space $L^{2}\left(\mu\right)$
with mean $f$, 
\begin{align*}
\mathbb{E}\left\Vert F_{N}-f\right\Vert ^{2}_{L^{2}\left(\mu\right)} & =\frac{1}{N}\mathbb{E}\left\Vert Y_{1}-f\right\Vert ^{2}_{L^{2}\left(\mu\right)}\\
 & =\frac{1}{N}\left(\mathbb{E}\left\Vert Y_{1}\right\Vert ^{2}_{L^{2}\left(\mu\right)}-\left\Vert f\right\Vert ^{2}_{L^{2}\left(\mu\right)}\right)\leq\frac{1}{N}\left(\mathbb{E}\left\Vert Y_{1}\right\Vert ^{2}_{L^{2}\left(\mu\right)}\right).
\end{align*}
Using $\left|k\left(\omega,s,t\right)\right|\leq1$ a.s. and Fubini,
\begin{align*}
\mathbb{E}\left[\left\Vert Y_{1}\right\Vert ^{2}_{L^{2}\left(\mu\right)}\right] & =\mathbb{E}_{\omega,z}\left[\int\left|c\left(z\right)\right|^{2}\left|k\left(\omega,\left\langle a,x\right\rangle +b,t\right)\right|^{2}d\mu\left(x\right)\right]\\
 & \leq\mathbb{E}_{z}\left[\left|c\left(z\right)\right|^{2}\right]=\left\Vert c\right\Vert ^{2}_{L^{2}\left(\rho\right)}\leq C^{2}.
\end{align*}
This gives 
\[
\mathbb{E}\left\Vert F_{N}-f\right\Vert ^{2}_{L^{2}(\mu)}\le C^{2}/N.
\]
Finally, Chebyshev's inequality yields 
\[
\mathbb{P}\left(\left\Vert F_{N}-f\right\Vert _{L^{2}(\mu)}>\epsilon\right)\le C^{2}/\left(N\epsilon^{2}\right).
\]
 
\end{proof}
The bound in \prettyref{thm:9} controls the mean-square error with
respect to an arbitrary sampling measure on $\mathbb{R}^{d}$. In
many applications one needs a uniform guarantee on a compact set $X\subset\mathbb{R}^{d}$.
A sup--norm bound requires mild regularity/ boundedness assumptions
to pass from finitely many control points (an $\epsilon$-net of $X$)
to all of $X$. We now state and prove this uniform result.
\begin{lem}
\label{lem:10}Let $\rho$ be a Borel probability measure on $Z:=\mathbb{R}^{d}\times\mathbb{R}\times\mathbb{R}$
with compact support $S:=supp\left(\rho\right)$. Assume $\left|k\left(\omega,s,t\right)\right|\le1$
almost surely and define $K\left(s,t\right)\coloneqq\mathbb{E}_{\omega}\left[k\left(\omega,s,t\right)\right]$.
For $c\in L^{2}(\rho)$ set
\begin{equation}
f_{c}\left(x\right)\coloneqq\int_{S}c\left(a,b,t\right)K\left(\left\langle a,x\right\rangle +b,t\right)d\rho\left(a,b,t\right).\label{eq:c2}
\end{equation}
Then the subclass
\begin{equation}
H^{\mathrm{cont}}_{\rho}\coloneqq\left\{ f_{c}:c\in C(S)\right\} \label{eq:c3}
\end{equation}
is dense in $H_{\rho}=\left\{ f_{c}:c\in L^{2}\left(\rho\right)\right\} $
for the uniform norm on any compact $X\subset\mathbb{R}^{d}$.
\end{lem}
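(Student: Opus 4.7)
The plan is to reduce uniform approximation of $f_{c}\in H_{\rho}$ on $X$ to $L^{2}(\rho)$-approximation of the coefficient $c$ by a continuous function on $S$, exploiting that the linear map $c\mapsto f_{c}$ has operator norm at most $1$ from $L^{2}(\rho)$ into $C(X)$ equipped with the sup-norm. In short, the boundedness $|K|\le 1$ should convert coefficient convergence in $L^{2}(\rho)$ directly into uniform convergence of the associated ridge expansion on all of $\mathbb{R}^{d}$.

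First, since $S=\mathrm{supp}(\rho)$ is a compact metric space and $\rho$ is a finite Borel (hence Radon) measure on $S$, the space $C(S)$ is norm-dense in $L^{2}(\rho)$ by the standard Lusin/Tietze approximation of simple functions. Hence for any $c\in L^{2}(\rho)$ one can pick a sequence $c_{n}\in C(S)$ with $\|c_{n}-c\|_{L^{2}(\rho)}\to 0$; each $f_{c_{n}}$ then lies in $H_{\rho}^{\mathrm{cont}}$ by definition.

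Next, I would transfer this $L^{2}(\rho)$-convergence to uniform convergence of $f_{c_{n}}$ to $f_{c}$ on $X$. The hypothesis $|k(\omega,s,t)|\le 1$ a.s.\ together with Jensen's inequality gives $|K(s,t)|\le 1$ pointwise. For each $x\in X$ set $K_{x}(a,b,t):=K(\langle a,x\rangle+b,t)$; then $|K_{x}|\le 1$ uniformly in $(x,z)$, so Cauchy--Schwarz applied to the defining integral of $f_{c_{n}}-f_{c}$ yields
\begin{equation*}
|f_{c_{n}}(x)-f_{c}(x)|\le \|c_{n}-c\|_{L^{2}(\rho)}\cdot \|K_{x}\|_{L^{2}(\rho)}\le \|c_{n}-c\|_{L^{2}(\rho)}.
\end{equation*}
The right-hand side is independent of $x$, so taking the supremum over $X$ gives $\|f_{c_{n}}-f_{c}\|_{L^{\infty}(X)}\le \|c_{n}-c\|_{L^{2}(\rho)}\to 0$, which is the conclusion.

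There is essentially no serious obstacle: the whole argument rests on the bound $|K|\le 1$, which is what turns any $L^{2}(\rho)$-approximation of the coefficient into a sup-norm approximation of the function. Compactness of $X$ is not actually needed for the inequality itself; it enters only implicitly (via \prettyref{lem:1}) to guarantee that $f_{c}\in C(X)$, so that uniform approximation in $C(X)$ is the right notion. The only point to handle carefully is the convention that $c\in L^{2}(\rho)$ is an equivalence class: since $\rho$ is supported on $S$, both sides of the representation are unchanged if one modifies $c$ on a $\rho$-null set, so approximation by $c_{n}\in C(S)$ is unambiguous.
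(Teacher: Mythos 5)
Your proposal is correct and follows essentially the same route as the paper's proof: density of $C(S)$ in $L^{2}(\rho)$ from compact support, the bound $|K|\le 1$ inherited from $|k|\le 1$, and Cauchy--Schwarz against the probability measure $\rho$ to convert $L^{2}(\rho)$-convergence of coefficients into sup-norm convergence of $f_{c_{n}}$. No substantive differences.
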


\begin{proof}
Since $S$ is compact and $\rho$ is a finite Borel measure, $C\left(S\right)$
is dense in $L^{2}\left(\text{\ensuremath{\rho}}\right)$. Also $\left|K\left(s,t\right)\right|\le\mathbb{E}_{\omega}\left[\left|k\left(\cdot,s,t\right)\right|\right]\le1$
for all $\left(s,t\right)$. 

Given $c\in L^{2}\left(\rho\right)$, pick $c_{n}\in C\left(S\right)$
with $c_{n}\to c$ in $L^{2}\left(\rho\right)$. For any $x\in X$,

\begin{align*}
\left|f_{c_{n}}\left(x\right)-f_{c}\left(x\right)\right| & =\left|\int_{S}\left(c_{n}-c\right)\left(z\right)K\left(\left\langle a,x\right\rangle +b,t\right)d\rho\left(z\right)\right|\\
 & \le\int_{S}\left|c_{n}-c\right|\left|K\right|d\rho\le\|c_{n}-c\|_{L^{2}\left(\rho\right)}
\end{align*}
using the fact that $\rho$ is a probability measure. Then, 
\[
\sup_{x\in X}\left|f_{c_{n}}\left(x\right)-f_{c}\left(x\right)\right|\le\|c_{n}-c\|_{L^{2}\left(\rho\right)}\xrightarrow[n\to\infty]{}0
\]
and so $H^{cont}_{\rho}$ is dense in $H_{\rho}$ under the $\|\cdot\|_{\infty}$-norm
on $X$. 
\end{proof}
\begin{thm}[Uniform approximation in $H^{cont}_{\rho}$ on compact sets]
\label{thm:11} Let $X\subset\mathbb{R}^{d}$ be compact. Assume:
\begin{itemize}
\item $k(\omega,s,t)$ is $L_{k}$-Lipschitz in $s$, uniformly in $t$,
almost surely;
\item $\rho$ is supported on the product ball 
\begin{equation}
S=\left\{ \left(a,b,t\right)\in Z=\mathbb{R}^{d}\times\mathbb{R}\times\mathbb{R}:\left\Vert a\right\Vert _{2}\leq1,\:\left|b\right|\leq1,\:\left|t\right|\leq1\right\} ;\label{eq:c-4}
\end{equation}
\item $\left|k\left(\omega,s,t\right)\right|\le1$ almost surely.
\end{itemize}
Let $f=f_{c}\in H^{cont}_{\rho}$ (see \eqref{eq:c2}-\eqref{eq:c3}),
and set $C\coloneqq\left\Vert c\right\Vert _{L^{\infty}\left(S\right)}$.
Draw $z_{i}=\left(a_{i},b_{i},t_{i}\right)\stackrel{\mathrm{i.i.d.}}{\sim}\rho$
and $\omega_{i}\stackrel{\mathrm{i.i.d.}}{\sim}\mathbb{P}$ independently,
and define
\[
F_{N}(x):=\frac{1}{N}\sum^{N}_{i=1}c(z_{i})\,k\big(\omega_{i},\langle a_{i},x\rangle+b_{i},t_{i}\big).
\]
Then, for any $\delta\in(0,1)$ and $\epsilon>0$, with probability
at least $1-\delta$ (over the joint draw of $\left\{ z_{i},\omega_{i}\right\} ^{N}_{i=1}$),
\begin{equation}
\sup_{x\in X}\left|F_{N}\left(x\right)-f\left(x\right)\right|\le C\sqrt{\frac{2\log\left(2\mathcal{N}\left(\epsilon,X\right)\right)+2\log\left(1/\delta\right)}{N}}+2CL_{k}\epsilon,\label{eq:c-5}
\end{equation}
where $\mathcal{N}\left(\epsilon,X\right)$ denotes the $\epsilon$-covering
number of $X$ in the Euclidean metric.

\end{thm}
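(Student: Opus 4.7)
The plan is a standard "net plus concentration plus Lipschitz" argument. First, I would fix an $\epsilon$-net $\{x_j\}_{j=1}^{M}\subset X$ of cardinality $M=\mathcal{N}(\epsilon,X)$, and work with the centered Monte Carlo sum at each net point. At a fixed $x_j$, write $F_N(x_j)-f(x_j)=\frac{1}{N}\sum_{i=1}^N\bigl[Y_i(x_j)-f(x_j)\bigr]$, where $Y_i(x):=c(z_i)\,k(\omega_i,\langle a_i,x\rangle+b_i,t_i)$. As in \prettyref{thm:9}, $\mathbb{E}[Y_i(x_j)]=f(x_j)$ by conditioning on $z_i$ and using $\mathbb{E}_\omega[k(\omega,s,t)]=K(s,t)$. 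The improvement over \prettyref{thm:9} is that $c\in C(S)$ on a compact support gives the uniform bound $|c(z_i)|\le C$; combined with $|k|\le 1$ a.s., each summand satisfies $|Y_i(x_j)-f(x_j)|\le 2C$ a.s. Hoeffding's inequality then yields $\mathbb{P}(|F_N(x_j)-f(x_j)|>\tau)\le 2\exp(-N\tau^2/(2C^2))$, and a union bound over the $M$ net points produces probability at most $2M\exp(-N\tau^2/(2C^2))$; setting this equal to $\delta$ and solving for $\tau$ gives the first term on the right-hand side of \eqref{eq:c-5}.

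Next, I would control the off-net error via a common Lipschitz constant for $F_N$ and $f$. For each realization, the a.s. Lipschitz hypothesis on $s\mapsto k(\omega,s,t)$ together with $\|a_i\|_2\le 1$ on $\mathrm{supp}(\rho)$ shows that $x\mapsto k(\omega_i,\langle a_i,x\rangle+b_i,t_i)$ is $L_k$-Lipschitz on $\mathbb{R}^d$; with $|c(z_i)|\le C$, the random network $F_N$ is then $CL_k$-Lipschitz on $X$ almost surely. The target $f$ inherits the same Lipschitz constant by estimating the integrand pointwise, i.e.\ $|f(x)-f(x')|\le\int_S|c(z)|\,L_k\|a\|_2\,\|x-x'\|\,d\rho(z)\le CL_k\|x-x'\|$. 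For any $x\in X$, choosing a net point $x_j$ with $\|x-x_j\|\le\epsilon$, the triangle inequality gives
\[
|F_N(x)-f(x)|\le|F_N(x_j)-f(x_j)|+|F_N(x)-F_N(x_j)|+|f(x)-f(x_j)|\le|F_N(x_j)-f(x_j)|+2CL_k\epsilon,
\]
which on the high-probability event yields the second term in \eqref{eq:c-5}.

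The main obstacle is modest: the only delicate point is verifying that $F_N$ and $f$ share the common Lipschitz constant $CL_k$ from the pathwise slicewise Lipschitz hypothesis on $k$. The product-ball support condition \eqref{eq:c-4} is precisely what prevents the affine composition $x\mapsto\langle a,x\rangle+b$ from inflating the constant; without it one would need a moment hypothesis on $\|a\|_2$ under $\rho$ and a more delicate concentration step to absorb the resulting unbounded Lipschitz factor. Beyond this, the argument is a clean combination of a scalar Hoeffding bound (made possible by the boundedness of $c$ on $S$) with the standard chaining-free $\epsilon$-net reduction, and the optimal trade-off in $\epsilon$ versus $\delta$ can be read off directly from \eqref{eq:c-5} if desired.
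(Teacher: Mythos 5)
Your proposal is correct and follows essentially the same route as the paper's proof: an $\epsilon$-net, pointwise unbiasedness via conditioning on $z_i$, Hoeffding plus a union bound over the net (with the range $[-C,C]$ of each $Y_i$ giving exactly the $2\exp(-N\theta^{2}/(2C^{2}))$ tail), and a common $CL_{k}$ Lipschitz constant for $F_{N}$ and $f$ to pass from the net to all of $X$. The paper's argument differs only in presentation, e.g.\ it derives the Lipschitz bound for $f$ by first noting that $K=\mathbb{E}_{\omega}[k]$ inherits the $L_{k}$-Lipschitz property, which is equivalent to your direct estimate on the integrand.
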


\begin{proof}
Fix $\epsilon>0$ and let $\left\{ x^{1},\dots,x^{M}\right\} $ be
an $\epsilon$-net of $X$, with $M=\mathcal{N}\left(\epsilon,X\right)<\infty$.
For each net point $x^{m}$, define 
\[
Y_{i}\left(x^{m}\right)\coloneqq c\left(z_{i}\right)k\left(\omega_{i},\left\langle a_{i},x^{m}\right\rangle +b_{i},t_{i}\right),\quad F_{N}\left(x^{m}\right)=\frac{1}{N}\sum^{N}_{i=1}Y_{i}\left(x^{m}\right).
\]
Since $\left|c\left(z_{i}\right)\right|\leq C$ and $\left|k\right|\leq1$
almost surely, $\left|Y_{i}\left(x^{m}\right)\right|\leq C$ almost
surely. Moreover, 
\begin{align*}
\mathbb{E}\left[Y_{i}\left(x^{m}\right)\mid z_{i}\right] & =c\left(z_{i}\right)\mathbb{E}_{\omega}\left[k\left(\omega_{i},\left\langle a_{i},x^{m}\right\rangle +b_{i},t_{i}\right)\right]\\
 & =c\left(z_{i}\right)K\left(\left\langle a_{i},x^{m}\right\rangle +b_{i},t_{i}\right),
\end{align*}
hence, averaging over $z_{i}\sim\rho$, 
\[
\mathbb{E}\left[Y_{i}\left(x^{m}\right)\right]=f\left(x^{m}\right).
\]
By Hoeffding's inequality, 
\[
\mathbb{P}\left(\left|F_{N}\left(x^{m}\right)-f\left(x^{m}\right)\right|>\theta\right)\leq2\exp\left(-\frac{N\theta^{2}}{2C^{2}}\right),\quad\theta>0.
\]
A union bound over the $M$ net points gives 
\begin{align*}
\mathbb{P}\left(\max_{1\leq m\leq M}\left|F_{N}\left(x^{m}\right)-f\left(x^{m}\right)\right|>\theta\right) & =\mathbb{P}\left(\bigcup^{M}_{m=1}\left\{ \left|F_{N}\left(x^{m}\right)-f\left(x^{m}\right)\right|>\theta\right\} \right)\\
 & \leq2M\exp\left(-\frac{N\theta^{2}}{2C^{2}}\right),\quad\theta>0.
\end{align*}
Setting the right-hand side to $\delta$ and solving for $\theta$,
we get that, with probability at least $1-\delta$, 
\begin{align}
\max_{1\leq m\leq M}\left|F_{N}\left(x^{m}\right)-f\left(x^{m}\right)\right| & \leq C\sqrt{\frac{2\log\left(2\mathcal{N}\left(\epsilon,X\right)\right)+2\log\left(1/\delta\right)}{N}}.\label{eq:c5}
\end{align}

By the Lipschitz assumption on $k$, for all $s,s',t\in\mathbb{R}$,
\[
\left|k\left(\omega,s,t\right)-k\left(\omega,s',t\right)\right|\leq L_{k}\left|s-s'\right|,\quad a.s.
\]
It follows that, for any $x,x'\in\mathbb{R}^{d}$, 
\begin{align*}
\left|k\left(\omega,\left\langle a,x\right\rangle +b,t\right)-k\left(\omega,\left\langle a,x'\right\rangle +b,t\right)\right| & \leq L_{k}\left|\left\langle a,x-x'\right\rangle \right|\\
 & \leq L_{k}\left\Vert a\right\Vert \left\Vert x-x'\right\Vert ,\quad a.s.
\end{align*}
Since $\rho$ is supported on the product ball $S$ (see \eqref{eq:c-4}),
we have $\left\Vert a\right\Vert \leq1$, and so the map $x\mapsto k\left(\omega,\left\langle a,x\right\rangle +b,t\right)$
is $L_{k}$-Lipschitz, hence 
\[
x\mapsto c\left(z_{i}\right)k\left(\omega,\left\langle a,x\right\rangle +b,t\right),\quad a.s.
\]
is $CL_{k}$-Lipschitz, and so 
\[
Lip\left(F_{N}\right)\leq CL_{k},\quad a.s.
\]

Note that $K\left(s,t\right)=\mathbb{E}_{\omega}\left[k\left(\omega,s,t\right)\right]$
is also $L_{k}$-Lipschitz in $s$ uniformly in $t$, since
\[
\left|K\left(s,t\right)-K\left(s',t\right)\right|\leq\mathbb{E}_{\omega}\left[\left|k\left(\omega,s,t\right)-k\left(\omega,s',t\right)\right|\right]\leq L_{k}\left|s-s'\right|.
\]
This means each atom $x\mapsto K\left(\left\langle a,x\right\rangle +b,t\right)$
is $L_{k}$-Lipschitz ($\left\Vert a\right\Vert \leq1$ by assumption),
and so 
\[
Lip\left(f\right)\leq CL_{k}.
\]

Now for any $x\in X$, let $x^{m}$ be the net point with $\left\Vert x-x^{m}\right\Vert <\epsilon$.
Then, 
\begin{align*}
\left|F_{N}\left(x\right)-f\left(x\right)\right| & \leq\left|F_{N}\left(x\right)-F_{N}\left(x^{m}\right)\right|+\left|F_{N}\left(x^{m}\right)-f\left(x^{m}\right)\right|+\left|f\left(x^{m}\right)-f\left(x\right)\right|\\
 & \leq\left(Lip\left(F_{N}\right)+Lip\left(f\right)\right)\epsilon+\left|F_{N}\left(x^{m}\right)-f\left(x^{m}\right)\right|\\
 & =2CL_{k}\epsilon+\left|F_{N}\left(x^{m}\right)-f\left(x^{m}\right)\right|,
\end{align*}
so that 
\begin{equation}
\sup_{x\in X}\left|F_{N}\left(x\right)-f\left(x\right)\right|\leq2CL_{k}\epsilon+\max_{1\leq m\leq M}\left|F_{N}\left(x^{m}\right)-f\left(x^{m}\right)\right|.\label{eq:c7}
\end{equation}
Combining \eqref{eq:c5} with \eqref{eq:c7} gives that stated inequality
\eqref{eq:c-5}.
\end{proof}
\begin{cor}
If $X\subset\mathbb{R}^{d}$ is contained in a Euclidean ball of radius
$D$, then with probability at least $1-\delta$, 
\begin{equation}
\sup_{x\in X}\left|F_{N}\left(x\right)-f\left(x\right)\right|=\tilde{\mathcal{O}}\left(C\sqrt{\frac{d\log N+\log1/\delta}{N}}\right).\label{eq:c8}
\end{equation}
\end{cor}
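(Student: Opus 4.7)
The plan is to specialize the abstract bound in \prettyref{thm:11} to the case where $X$ lies in a Euclidean ball of radius $D$, using a volumetric estimate for the covering number $\mathcal{N}(\epsilon,X)$ and then balancing the two error terms in \eqref{eq:c-5} by an appropriate choice of the net scale $\epsilon$. Since the conclusion is stated in $\tilde{\mathcal{O}}$ notation, only the leading polynomial-in-$N$ behavior matters; all constants and $\log D$, $\log L_{k}$, $\log C$ factors are absorbed.

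First, I would recall the standard volume argument: if $X$ sits inside a Euclidean ball of radius $D$ in $\mathbb{R}^{d}$, then
\[
\mathcal{N}(\epsilon,X)\le\bigl(1+2D/\epsilon\bigr)^{d},
\]
so that $\log\bigl(2\mathcal{N}(\epsilon,X)\bigr)\le d\log(1+2D/\epsilon)+\log 2$. Substituting this into the bound \eqref{eq:c-5} of \prettyref{thm:11} gives, with probability at least $1-\delta$,
\[
\sup_{x\in X}\bigl|F_{N}(x)-f(x)\bigr|\le C\sqrt{\frac{2d\log(1+2D/\epsilon)+2\log 2+2\log(1/\delta)}{N}}+2CL_{k}\epsilon.
\]

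Next, I would choose $\epsilon=1/N$ (any polynomial rate $\epsilon=N^{-\alpha}$ with $\alpha\ge 1/2$ works equally well for the purpose of the $\tilde{\mathcal{O}}$ bound). With this choice, the second term becomes $2CL_{k}/N$, which is $o(1/\sqrt{N})$ and therefore negligible compared to the first term. The first term becomes
\[
C\sqrt{\frac{2d\log\bigl(1+2DN\bigr)+2\log 2+2\log(1/\delta)}{N}}=\tilde{\mathcal{O}}\!\left(C\sqrt{\frac{d\log N+\log(1/\delta)}{N}}\right),
\]
where the tilde absorbs the $\log D$ and other constant factors. Combining the two estimates yields \eqref{eq:c8}.

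There is no real obstacle here; the argument is purely a tuning step applied to \prettyref{thm:11}. The only thing worth emphasizing is that the Lipschitz term $2CL_{k}\epsilon$ is made to decay faster than $1/\sqrt{N}$ precisely because we may take $\epsilon$ as small as we like, at the cost of only a logarithmic increase in the covering-number term, and this is what collapses the two-term bound into the single rate $\tilde{\mathcal{O}}\bigl(C\sqrt{(d\log N+\log(1/\delta))/N}\bigr)$.
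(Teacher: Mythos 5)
Your proposal is correct and follows essentially the same route as the paper: bound $\mathcal{N}(\epsilon,X)$ by a volumetric estimate, substitute into \eqref{eq:c-5}, and choose $\epsilon$ so that the Lipschitz term is negligible. The only difference is that the paper optimizes $\epsilon$ by calculus (arriving at $\epsilon^{*}\approx L_{k}^{-1}\sqrt{d/(N\log N)}$) whereas you simply set $\epsilon=1/N$; for a $\tilde{\mathcal{O}}$ conclusion your choice is equally valid and arguably cleaner.
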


\begin{proof}
By a standard volumetric argument (see, e.g., \cite[Corollary 4.2.13]{MR3837109}),
if $X\subset\mathbb{R}^{d}$ is contained in a ball of radius $D$,
then $\mathcal{N}\left(\epsilon,X\right)\leq\left(sD/\epsilon\right)^{d}$
for a universal constant $s>0$. Substituting into \prettyref{eq:c-5}
gives
\begin{align}
\sup_{x\in X}\left|F_{N}\left(x\right)-f\left(x\right)\right| & \approx C\sqrt{\frac{d\log\left(sD/\epsilon\right)}{N}}+2CL_{k}\epsilon\label{eq:3.9}
\end{align}
Minimizing over $\epsilon$: Setting $f\left(\epsilon\right)=C\sqrt{\frac{d\log\left(sD/\epsilon\right)}{N}}+2CL_{k}\epsilon$,
then 
\[
f'\left(\epsilon\right)=-\frac{C}{2}\sqrt{\frac{d}{N}}\frac{1}{\epsilon\sqrt{\log\left(sD/\epsilon\right)}}+2CL_{k}=0
\]
implies that 
\[
\epsilon=\frac{1}{4L_{k}}\sqrt{\frac{d}{N}}\frac{1}{\sqrt{\log\left(sD/\epsilon\right)}}.
\]
At the optimum $\log\left(sD/\epsilon\right)$ is of order $\log N$,
hence a near optimal choice is 
\[
\epsilon^{*}\approx\frac{1}{L_{k}}\sqrt{\frac{d}{N\log N}}.
\]
Substituting $\epsilon^{*}$ into \eqref{eq:3.9} gives the claimed
bound from \eqref{eq:c8}.
\end{proof}
\begin{cor}[uniform approximation in $H_{\rho}$ on compact sets]
\label{cor:13}Assume the hypotheses of \prettyref{lem:10} and \prettyref{thm:11},
and let $X\subset\mathbb{R}^{d}$ be compact. For any $f\in H_{\rho}$,
$\eta>0$ and $\delta\in\left(0,1\right)$, there exists a coefficient
$c^{*}\in C\left(S\right)$, $\epsilon>0$, and $N\in\mathbb{N}$
such that, if 
\[
F_{N}\left(x\right)=\frac{1}{N}\sum^{N}_{i=1}c^{*}\left(z_{i}\right)k\left(\omega_{i},\left\langle a_{i},x\right\rangle +b_{i},t_{i}\right)
\]
is constructed as in \prettyref{thm:11} using $c^{*}$, then with
probability at least $1-\delta$ (over the joint draw of $\left\{ z_{i},\omega_{i}\right\} ^{N}_{i=1}$),
\[
\sup_{x\in X}\left|F_{N}\left(x\right)-f\left(x\right)\right|<\eta.
\]
\end{cor}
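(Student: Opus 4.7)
The plan is a standard two-step $\eta/2$ argument: first use Lemma \ref{lem:10} to reduce from a general $L^{2}(\rho)$ coefficient to a continuous one, then apply the uniform Monte Carlo bound of Theorem \ref{thm:11} to the resulting surrogate. No new probabilistic ingredient is needed; everything is bookkeeping of the constants.

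First, given $f = f_{c} \in H_{\rho}$ with $c \in L^{2}(\rho)$, Lemma \ref{lem:10} produces a sequence $c_{n} \in C(S)$ with $\sup_{x\in X}|f_{c_{n}}(x)-f(x)| \to 0$. Choose $n$ large enough that this sup is at most $\eta/2$ and set $c^{*}\coloneqq c_{n}$. Since $c^{*}\in C(S)$ and $S$ is compact, $C^{*}\coloneqq\|c^{*}\|_{L^{\infty}(S)}<\infty$, so the hypotheses of Theorem \ref{thm:11} are satisfied for the target $f_{c^{*}}\in H^{\mathrm{cont}}_{\rho}$ with bound constant $C^{*}$. Next, I apply Theorem \ref{thm:11} to $f_{c^{*}}$: for any $\epsilon>0$ and $\delta\in(0,1)$, with probability at least $1-\delta$,
\[
\sup_{x\in X}\bigl|F_{N}(x)-f_{c^{*}}(x)\bigr|\le C^{*}\sqrt{\frac{2\log(2\mathcal{N}(\epsilon,X))+2\log(1/\delta)}{N}}+2C^{*}L_{k}\epsilon.
\]
Pick $\epsilon$ so small that $2C^{*}L_{k}\epsilon<\eta/4$; with $\epsilon$ now fixed, $\mathcal{N}(\epsilon,X)$ is a finite constant (compactness of $X$), and the first term tends to zero in $N$, so choose $N$ large enough to make it below $\eta/4$.

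Finally, the triangle inequality finishes the argument: on the high-probability event,
\[
\sup_{x\in X}|F_{N}(x)-f(x)|\le\sup_{x\in X}|F_{N}(x)-f_{c^{*}}(x)|+\sup_{x\in X}|f_{c^{*}}(x)-f(x)|<\tfrac{\eta}{4}+\tfrac{\eta}{4}+\tfrac{\eta}{2}=\eta.
\]
There is no real obstacle; the only subtlety worth flagging is the \emph{order of quantification}. Because the Monte Carlo constant $C^{*}$ in Theorem \ref{thm:11} depends on the choice of continuous coefficient $c^{*}$, the deterministic approximation step must be carried out first (producing $c^{*}$ and hence $C^{*}$), and only then are $\epsilon$ and $N$ chosen as functions of $C^{*},L_{k},\eta,\delta$, and the covering number of $X$. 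The randomness enters only in the second step, so Theorem \ref{thm:11} supplies the stated probability $1-\delta$ directly.
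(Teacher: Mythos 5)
Your proposal is correct and follows essentially the same route as the paper's own (sketched) proof: approximate $f$ uniformly by some $f_{c^*}$ with $c^*\in C(S)$ via Lemma \ref{lem:10}, apply Theorem \ref{thm:11} to $f_{c^*}$, and conclude by the triangle inequality, differing only in the $\eta/2+\eta/4+\eta/4$ versus $\eta/3+2\eta/3$ bookkeeping. Your remark on the order of quantification (fixing $c^*$ and hence $C^*$ before choosing $\epsilon$ and $N$) is a worthwhile point that the paper leaves implicit.
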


\begin{proof}
(sketch) By \prettyref{lem:10}, the subclass $H^{cont}_{\rho}$ is
uniformly dense in $H_{\rho}$ on any compact $X$. Given $f\in H_{\rho}$
and $\eta>0$, choose $c^{*}\in C\left(S\right)$ so that 
\[
\sup_{x\in X}\left|f_{c^{*}}\left(x\right)-f\left(x\right)\right|\leq\eta/3.
\]
Apply \prettyref{thm:11} to $f_{c^{*}}$: choose $\epsilon$ and
$N$ so that its high probability bound is $\leq2\eta/3$. By the
triangle inequality, $\sup_{x}\left|F_{N}\left(x\right)-f\left(x\right)\right|<\eta$
with probability $\geq1-\delta$.
\end{proof}
\bibliographystyle{plain}
\bibliography{ref}

\begin{thebibliography}{10}

\bibitem{MR51437}
N.~Aronszajn.
\newblock Theory of reproducing kernels.
\newblock {\em Trans. Amer. Math. Soc.}, 68:337--404, 1950.

\bibitem{pmlr-v70-avron17a}
Haim Avron, Michael Kapralov, Cameron Musco, Christopher Musco, Ameya
  Velingker, and Amir Zandieh.
\newblock Random {F}ourier features for kernel ridge regression: Approximation
  bounds and statistical guarantees.
\newblock In Doina Precup and Yee~Whye Teh, editors, {\em Proceedings of the
  34th International Conference on Machine Learning}, volume~70 of {\em
  Proceedings of Machine Learning Research}, pages 253--262. PMLR, 06--11 Aug
  2017.

\bibitem{MR3634886}
Francis Bach.
\newblock Breaking the curse of dimensionality with convex neutral networks.
\newblock {\em J. Mach. Learn. Res.}, 18:Paper No. 19, 53, 2017.

\bibitem{MR3634888}
Francis Bach.
\newblock On the equivalence between kernel quadrature rules and random feature
  expansions.
\newblock {\em J. Mach. Learn. Res.}, 18:Paper No. 21, 38, 2017.

\bibitem{MR1237720}
Andrew~R. Barron.
\newblock Universal approximation bounds for superpositions of a sigmoidal
  function.
\newblock {\em IEEE Trans. Inform. Theory}, 39(3):930--945, 1993.

\bibitem{10.5555/2984093.2984132}
Youngmin Cho and Lawrence~K. Saul.
\newblock Kernel methods for deep learning.
\newblock In {\em Proceedings of the 23rd International Conference on Neural
  Information Processing Systems}, NIPS'09, pages 342--350, Red Hook, NY, USA,
  2009. Curran Associates Inc.

\bibitem{MR1015670}
G.~Cybenko.
\newblock Approximation by superpositions of a sigmoidal function.
\newblock {\em Math. Control Signals Systems}, 2(4):303--314, 1989.

\bibitem{MR982264}
Richard~M. Dudley.
\newblock {\em Real analysis and probability}.
\newblock The Wadsworth \& Brooks/Cole Mathematics Series. Wadsworth \&
  Brooks/Cole Advanced Books \& Software, Pacific Grove, CA, 1989.

\bibitem{MR4375792}
Weinan E. and Stephan Wojtowytsch.
\newblock Representation formulas and pointwise properties for {B}arron
  functions.
\newblock {\em Calc. Var. Partial Differential Equations}, 61(2):Paper No. 46,
  37, 2022.

\bibitem{10.1007/978-3-540-45167-9_11}
Thomas G{\"a}rtner, Peter Flach, and Stefan Wrobel.
\newblock On graph kernels: Hardness results and efficient alternatives.
\newblock In Bernhard Sch{\"o}lkopf and Manfred~K. Warmuth, editors, {\em
  Learning Theory and Kernel Machines}, pages 129--143, Berlin, Heidelberg,
  2003. Springer Berlin Heidelberg.

\bibitem{HORNIK1991251}
Kurt Hornik.
\newblock Approximation capabilities of multilayer feedforward networks.
\newblock {\em Neural Networks}, 4(2):251--257, 1991.

\bibitem{10.5555/3042817.3042964}
Quoc Le, Tam\'{a}s Sarl\'{o}s, and Alex Smola.
\newblock Fastfood: approximating kernel expansions in loglinear time.
\newblock In {\em Proceedings of the 30th International Conference on
  International Conference on Machine Learning - Volume 28}, ICML'13, pages
  III--244--III--252. JMLR.org, 2013.

\bibitem{MR1102015}
Michel Ledoux and Michel Talagrand.
\newblock {\em Probability in {B}anach spaces}, volume~23 of {\em Ergebnisse
  der Mathematik und ihrer Grenzgebiete (3) [Results in Mathematics and Related
  Areas (3)]}.
\newblock Springer-Verlag, Berlin, 1991.
\newblock Isoperimetry and processes.

\bibitem{10.1145/1015330.1015443}
Cheng~Soon Ong, Xavier Mary, St\'{e}phane Canu, and Alexander~J. Smola.
\newblock Learning with non-positive kernels.
\newblock In {\em Proceedings of the Twenty-First International Conference on
  Machine Learning}, ICML '04, page~81, New York, NY, USA, 2004. Association
  for Computing Machinery.

\bibitem{MR1819645}
Allan Pinkus.
\newblock Approximation theory of the {MLP} model in neural networks.
\newblock In {\em Acta numerica, 1999}, volume~8 of {\em Acta Numer.}, pages
  143--195. Cambridge Univ. Press, Cambridge, 1999.

\bibitem{10.5555/2981562.2981710}
Ali Rahimi and Benjamin Recht.
\newblock Random features for large-scale kernel machines.
\newblock In {\em Proceedings of the 21st International Conference on Neural
  Information Processing Systems}, NIPS'07, pages 1177--1184, Red Hook, NY,
  USA, 2007. Curran Associates Inc.

\bibitem{10.5555/2981780.2981944}
Ali Rahimi and Benjamin Recht.
\newblock Weighted sums of random kitchen sinks: replacing minimization with
  randomization in learning.
\newblock In {\em Proceedings of the 22nd International Conference on Neural
  Information Processing Systems}, NIPS'08, pages 1313--1320, Red Hook, NY,
  USA, 2008. Curran Associates Inc.

\bibitem{MR2042050}
Bernhard Sch\"{o}lkopf, Ralf Herbrich, and Alex~J. Smola.
\newblock A generalized representer theorem.
\newblock In {\em Computational learning theory ({A}msterdam, 2001)}, volume
  2111 of {\em Lecture Notes in Comput. Sci.}, pages 416--426. Springer,
  Berlin, 2001.

\bibitem{MR2450103}
Ingo Steinwart and Andreas Christmann.
\newblock {\em Support vector machines}.
\newblock Information Science and Statistics. Springer, New York, 2008.

\bibitem{10.5555/3020847.3020936}
Danica~J. Sutherland and Jeff Schneider.
\newblock On the error of random fourier features.
\newblock In {\em Proceedings of the Thirty-First Conference on Uncertainty in
  Artificial Intelligence}, UAI'15, pages 862--871, Arlington, Virginia, USA,
  2015. AUAI Press.

\bibitem{MR3837109}
Roman Vershynin.
\newblock {\em High-dimensional probability}, volume~47 of {\em Cambridge
  Series in Statistical and Probabilistic Mathematics}.
\newblock Cambridge University Press, Cambridge, 2018.
\newblock An introduction with applications in data science, With a foreword by
  Sara van de Geer.

\bibitem{MR3662446}
Yun Yang, Mert Pilanci, and Martin~J. Wainwright.
\newblock Randomized sketches for kernels: fast and optimal nonparametric
  regression.
\newblock {\em Ann. Statist.}, 45(3):991--1023, 2017.

\end{thebibliography}

\end{document}